\documentclass{amsart}
\usepackage{amsfonts,amssymb,amsmath,amsthm}
\usepackage{url}
\usepackage{enumerate}
\usepackage[all,cmtip]{xy}

 \urlstyle{sf}
\newtheorem{thrm}{Theorem}[section]

\newtheorem{prop}[thrm]{Proposition}
\newtheorem{cor}[thrm]{Corollary}
\theoremstyle{definition}
\newtheorem{definition}[thrm]{Definition}

\newtheorem{remark}[thrm]{Remark}

\numberwithin{equation}{section}

\newcommand{\fgrade}{\operatorname{fgrade}}

\newcommand{\Spec}{\operatorname{Spec}}

\newcommand{\Ext}{\operatorname{Ext}}
\newcommand{\Supp}{\operatorname{Supp}}

\newcommand{\Hom}{\operatorname{Hom}}
\newcommand{\Fdepth}{\operatorname{F-depth}}

\newcommand{\depth}{\operatorname{depth}}

\newcommand{\vpl}{\operatornamewithlimits{\varprojlim}}

\newcommand{\vil}{\operatornamewithlimits{\varinjlim}}

\newcommand{\fm}{\frak{m}}

\newcommand{\fn}{\frak{n}}

\author{ Majid Eghbali}

\thanks{This research was in part supported by a grant from IPM}

\address{School of Mathematics, Institute for Research in Fundamental Sciences (IPM), P. O. Box: 19395-5746,
Tehran-Iran.} \email{m.eghbali@yahoo.com}

\keywords{Frobenius depth, Local cohomology, formal grade, depth.}

\subjclass[2000]{13D45, 14B15.}

\begin{document}

\title[ Cohomological invariants of local rings]{On cohomological invariants of local rings in positive characteristic
}

\begin{abstract}
The Frobenius depth denoted by $\Fdepth$ defined by
Hartshorne-Speiser in 1977 and later by Lyubeznik in 2006, in a
different way, for rings of positive characteristic. The aim of the
present paper is to compare the $\Fdepth$ with formal grade, and
$\depth$ to shed more light on the notion of Frobenius depth from a
different point of view.
\end{abstract}
 \maketitle

\section{Introduction} \label{sect1}

Let $Y$ be a closed subscheme of $\mathbb{P}^n_k$, the projective
space over a field $k$ of characteristic $p>0$. Vanishing of
$H^i(\mathbb{P}^n-Y,\mathcal{F})$ for all coherent sheaves
$\mathcal{F}$ was asked by Grothendieck (\cite{Gr68}). Among the
attempts to answer the mentioned question Hartshorne and Speicer in
\cite{HS} used the notion of Frobenius depth of $Y$ to give an
essentially complete solution to this problem.

To be more precise, Let $Y$ be a Noetherian scheme of finite
dimension, whose local rings are all of characteristic $p>0$. Let $y
\in Y$ be a (not necessarily closed) point. Let $d(y)$ be the
dimension of the closure $\{y\}^-$ of the point $y$. Let
$\mathcal{O}_y$ be the local ring of $y$, let $k_0$ be its residue
field, let $k$ be a perfect closure of $k_0$, and let
$\widehat{\mathcal{O}}_y$, be the completion of $\mathcal{O}_y$.
Choose a field of representatives for $k_0$ in
$\widehat{\mathcal{O}}_y$. Then we can consider
$\widehat{\mathcal{O}}_y$ as a $k_0$-algebra, and we let $A_y$ be
the local ring $\widehat{\mathcal{O}}_y \otimes_{k_0} k$ obtained by
base extension to $k$. Let $Y_y=\Spec A_y$ and let $P$ denote its
closed point. So, the Frobenius depth of $Y$ is denoted by $\Fdepth
Y$ is the largest integer $r$ (or $+\infty$) such that for all
points $y \in Y$, one has $H^i_P(Y_y,\mathcal{O}_{Y_y})_s=0$ (the
stable part of $H^i_P(Y_y,\mathcal{O}_{Y_y})$) for all $i < r-d(y)$.

 From the local
algebra point of view, Grothendieck's problem is stated to find
conditions under which  $H^i_I(M)=0$ for all $i>n\ (n \in
\mathbb{Z})$ and all $A$-modules $M$, where $A$ is a commutative
Noetherian local ring and $I \subset A$ is an ideal. For an
$A$-module $M$, we denote by $H^i_I(M)$ the $i$th local cohomology
module of $M$ with respect to $I$. For more details the reader may
consult \cite{Gr67} and \cite{Br-Sh}. From the celebrated result of
Hartshorne (cf. \cite[pp. 413]{Hart68}), it is enough to find
conditions for the vanishing of $H^i_I(A)$. In this direction, for a
local ring $(A,\fm)$, Lyubeznik in \cite{Lyu06} using the Frobenius
map from $ H^i_{\fm}(A)$ to itself defined the Frobenius depth of
$A$ denoted by $\Fdepth A$ as the smallest $i$ such that for every
iteration of Frobenius map, $H^i_{\fm}(A)$ does not go to zero. It
is note worthy to say that the Lyubeznik's $\Fdepth$ coincides with
the notion of $\Fdepth$ defined by Hartshorne and Speiser, whenever
$A$ admits a surjection from a regular local ring and $Y=\Spec A$
(cf. \cite[Corollary 6.3]{Lyu06}).

Consider the family of local cohomology modules $\{H^i_{\fm}
(M/I^nM)\}_{n \in \mathbb{N}}$ where, $(A,\fm)$ is not necessarily
of characteristic $p>0$. For every $n \in \mathbb{N}$ there is a
natural homomorphism
$$H^i_{\fm} (M/I^{n+1}M) \rightarrow H^i_{\fm} (M/I^nM)$$
(induced from the natural projections $M/I^{n+1}M \rightarrow
M/I^nM$) such that the family forms a projective system. The
projective limit ${\vpl}_nH^i_{\fm}(M/I^n M)$ is called the $i$th
formal local cohomology of $M$ with respect to $I$ (cf. \cite{Sch}).
Formal local cohomology modules were used by Peskine and Szpiro in
\cite{P-S} when $A$ is a regular ring of prime characteristic. It is
noteworthy to mention that if $U = \Spec(A) \setminus \{\fm\}$ and
$( \widehat{U},\mathcal{O}_{\widehat{u}})$ denote the formal
completion of $U$ along $V(I) \setminus \{\fm \}$ and also
$\widehat{\mathcal{F}}$ denotes the
$\mathcal{O}_{\widehat{u}}$-sheaf associated to ${\vpl}_n M/I^n M$,
they have described the formal cohomology modules
$H^i(\widehat{U},\mathcal{O}_{\widehat{u}})$ via the isomorphisms
$H^i(\widehat{U},\mathcal{O}_{\widehat{u}})\cong
{\vpl}_nH^i_{\fm}(M/I^n M)$, $i \geq 1$. See also \cite[proposition
(2.2)]{Og} when $A$ is a Gorenstein ring.

The formal grade, $\fgrade(I,M)$, is defined as the index of the
minimal nonvanishing formal cohomology module, i.e., $\fgrade(I,M) =
\inf \{i \in \mathbb{Z}|\ \ {\vpl}_nH^i_{\fm}(M/I^n M) \neq 0\}$.
One way to check out vanishing of local cohomology modules is the
following duality
\begin{equation}\label{duality}
    {\vpl}_nH^i_{\fm}(A/I^n) \cong \Hom_A(H^{\dim A-i}_{I}(A),E(A/\fm)),
\end{equation}
where $(A,\fm)$ is a Gorenstein local ring and $E(A/\fm)$ denotes
the injective hull of the residue field (cf. \cite[Remark
3.6]{Sch}). To be more precise, in this case the last non vanishing
amount of $H^{i}_{I}(A)$ may be described with the $\fgrade(I,A)$.
Thus, it motivates us to consider the invariants $\Fdepth$ and
$\fgrade$ to shed more light on the notion of Frobenius depth from a
different point of view. For this reason, in Section $2$, we bring
some auxiliary results and among them we examine the structure of
${\vpl}_nH^i_{\fm}(A/I^n)$ as a unit $A[F^e]$-module (Theorem
\ref{unit}). Moreover, it has a structure of $D$-modules. In Section
$3$, we concentrate on $\Fdepth$ and reprove some known results and
then compare it with the formal grade and depth, (cf. Theorem
\ref{comparison} and Corollary \ref{connected}).

\section{Auxiliary Results}

Throughout this section all rings are assumed to contain a field of
positive characteristic. The symbol $A$ will always denote a
commutative Noetherian ring of finite characteristic. We adapt the
notation from \cite{Bl} and except for notation we mostly follow
Lyubeznik \cite{Lyu97}. We let $F=F_A$ the Frobenius map on $A$,
that is $F:A \rightarrow A$, with $a \mapsto a^p,\ a \in A$. We
denote the $e$th iterate of the Frobenius map by $A^e$ which is the
$A-A$-bimodule. As a left $A$-module it is $A$ and as a right
$A$-module we have $m.a=a^{p^e}m$ for $m \in A^e$. We say $A$ is
F-finite, whenever $A^e$ is a finitely generated right $A$-module.

\begin{remark}\label{Hart-Speis}
Let us recall from \cite[Proposition 1.1(a)]{HS} that for a ring $A$
which is either a localization of an algebra of finite type over a
perfect field $k$, or a complete local ring containing a perfect
field $k$ as its residue field, then $A$ is $F$-finite.
\end{remark}

In the present section, among our results we recall various results
due to Hartshorne-Speiser \cite{HS}, Peskine-Szpiro \cite{P-S},
Lyubeznik \cite{Lyu97} and Blickle \cite{Bl}.

Peskine and Szpiro in \cite{P-S} defined the Frobenius functor as
follows:

\begin{definition} The Frobenius functor is the right exact functor
from $A$-modules to $A$-modules given by
$$F^{\ast}_A M:=A^1 \otimes_A M.$$
Its $e$th power is $F^{e\ast}_A M=A^e \otimes_A M$. For brevity we
often write $F^{e\ast}$ for $F^{e\ast}_A $ when there is no
ambiguity about the ring $A$.
\end{definition}

It follows from the definition that $F^{e\ast}$ commutes with direct
sum, direct limit and localization. By a theorem of Kunz \cite{Ku69}
the Frobenius functor is flat whenever $A$ is a regular ring, hence
in this case $F^{e\ast}$ will be exact and immediately one has
$F^{e\ast} A=A^e \otimes_A A \cong A$ and $F^{e\ast} I=I^{[p^e]}$ an
ideal of $A$ generated by $p^e$th powers of the elements of $I$.

\begin{definition} \label{def.unit} An $A[F^e]$-module is an $A$-module $M$ together with an $A$-linear map
$$\mathcal{V}^e_M:F^{e\ast}_AM \rightarrow M.$$
A morphism between two $A[F^e]$-modules $(M,\mathcal{V}^e_M)$ and
$(N,\mathcal{V}^e_N)$ is an $A$-linear map $\varphi : M \rightarrow
N$ such that the following diagram commutes:

$$\xymatrix{
F^{e\ast}M \ar[d]^{\mathcal{V}^e_M} \ar[r]^{F^{e\ast}(\varphi)} &F^{e\ast}N\ar[d]^{\mathcal{V}^e_N}\\
M \ar[r]^{\varphi} &N}.$$ In fact, we can consider $F^eM$ as a
$p^e$-linear map from $M \rightarrow M$; as such it is not
$A$-linear but we have $F^e(am) = a^{[p^e]}F^e(m),\ a \in A^e, m \in
M$. Furthermore, an $A[F^e]$-module $(M,\mathcal{V}^e_M)$ is called
a unit $A[F^e]$-module if $\mathcal{V}^e_M$ is an isomorphism (cf.
\cite[page 16]{Bl}).
\end{definition}

\begin{remark}
 As we have seen above, $A$ is a unit $A[F^e]$-module, whenever $A$
is regular but $I$ is not in general. For a multiplicatively closed
subset $S$ of $A$, the structural map $\mathcal{V}^e_{S^{-1}A}:A^e
\otimes S^{-1}A \rightarrow S^{-1}A$ is an isomorphism (see
\cite{Bl}).
\end{remark}

The following definition introduced in \cite{HS}:

\begin{definition} \label{forgetful} Let $(M,\mathcal{V}^e)$ be an $A[F^e]$-module.
We define $G(M)$ as the inverse limit generated by the structural
map $\mathcal{V}^e$, i.e.
$$G(M):={\vpl}(\cdots \rightarrow F^{3e\ast}M \stackrel{F^{2e\ast}\mathcal{V}^e}{\longrightarrow}
 F^{2e\ast}M \stackrel{F^{e\ast}\mathcal{V}^e}{\longrightarrow} F^{e\ast}M \stackrel{\mathcal{V}^e}{\longrightarrow} M).$$
\end{definition}
Note that there are natural maps $\pi_e: G(M) \rightarrow
F^{e\ast}M$. Moreover, the maps $F^{e\ast}\pi_r: F^{e\ast}G(M)
\rightarrow F^{e(r+1)\ast}M$ are compatible with the maps defining
$G(M)$ and thus by the universal property of inverse limits, the map
$F^{e\ast}G(M) \rightarrow G(M)$ defines the natural $A[F^e]$-module
structure on $G(M)$.

\begin{prop}  \label{4.1}(\cite[Proposition 1.2]{HS} and \cite[Proposition 4.1]{Bl}) Let $A$ be regular and $F$-finite and $M$ an
$A[F^e]$-module. Then $G(M)$ is a unit $A[F^e]$-module.
\end{prop}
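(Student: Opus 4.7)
The plan is to show that the structural map $\mathcal{V}^e_{G(M)}: F^{e\ast}G(M)\to G(M)$ induced by the universal property of the inverse limit is an isomorphism. The driving observation is that under the standing hypotheses the Frobenius functor $F^{e\ast}(-) = A^e \otimes_A -$ commutes with arbitrary inverse limits, and so applying it to the system defining $G(M)$ produces a shifted copy of the same system, whose limit is again $G(M)$.

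First I would establish the necessary property of $A^e$. Since $A$ is $F$-finite, $A^e$ is finitely generated as a right $A$-module, and since $A$ is regular, Kunz's theorem tells us that $A^e$ is flat as a right $A$-module. A finitely generated flat module over a Noetherian ring is projective, so $A^e$ is a finitely generated projective right $A$-module. This is the key input; it is also the place where both hypotheses (regular and $F$-finite) are simultaneously used.

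Next, I would invoke the standard fact that for a finitely generated projective right $A$-module $P$, the functor $P \otimes_A -$ commutes with arbitrary inverse limits. This reduces, via the direct-summand characterization of projectives, to the trivial case $P = A^n$, where the statement is the fact that finite direct sums commute with limits. Applying this to $P = A^e$ and the inverse system $(F^{re\ast}M)_{r\ge 0}$ yields
\[
 F^{e\ast}G(M) \;=\; A^e \otimes_A \vpl_r F^{re\ast}M \;\cong\; \vpl_r\bigl(A^e\otimes_A F^{re\ast}M\bigr) \;=\; \vpl_r F^{(r+1)e\ast}M .
\]
The right-hand side is the inverse limit of the tail of the original system. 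A routine shift argument identifies it with $G(M)$ itself: a coherent sequence $(y_r)$ with $y_r \in F^{(r+1)e\ast}M$ extends uniquely to a coherent sequence in $G(M)$ by prepending $\mathcal{V}^e(y_0)\in M$, and conversely any coherent sequence in $G(M)$ restricts to its tail.

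The last step, which I expect to be the main technical (though entirely formal) obstacle, is checking that the composite isomorphism constructed above agrees with the structural map $\mathcal{V}^e_{G(M)}$ produced via the universal property (the map described between Definition~\ref{forgetful} and Proposition~\ref{4.1}). This is a diagram chase: one verifies that, under $F^{e\ast}\pi_r$ composed with $F^{(r+1)e\ast}\mathcal{V}^e$, a generic element of $F^{e\ast}G(M)$ lands in the $r$-th component of the shifted system in exactly the same way as it does under the limit isomorphism, so the two maps $F^{e\ast}G(M)\to F^{re\ast}M$ coincide for every $r$ and therefore so do their factorizations through $G(M)$. Once this identification is made, the isomorphism from the previous paragraph is the structural map, and $G(M)$ is a unit $A[F^e]$-module.
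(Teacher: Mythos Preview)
The paper does not supply its own proof of this proposition; it is quoted verbatim from \cite[Proposition~1.2]{HS} and \cite[Proposition~4.1]{Bl}. Your argument is correct and is essentially the one found in those references: the crux is that $A^e$ is finitely generated and flat (by $F$-finiteness and Kunz), hence projective over the Noetherian ring $A$, so $F^{e\ast}=A^e\otimes_A-$ commutes with inverse limits; applying it to the defining tower of $G(M)$ yields the same tower shifted by one, and the induced isomorphism is the structural map. The paper itself invokes exactly this mechanism later, in the proof of Proposition~\ref{f-nil}, where it notes that ``tensoring with $A^e$ commutes with the inverse limit, as $A^e$ is a free right $A$-module (cf.\ \cite[Proposition~1.1(b)]{HS}).''
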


In order to extend the Matlis duality functor $D(-)=\Hom(-, E_A)$
where, $E_A$ is the injective hull of the residue field, the functor
$\mathcal{D}$ from $A[F^e]$-modules to $A[F^e]$-modules is defined
as follows ( \cite[Section 4]{Bl}):

Let $(M, \mathcal{V}^e)$ be an $A[F^e]$-module. We define
$$\mathcal{D}(M)={\vil}(D(M) \stackrel{D(\mathcal{V}^e)}{\longrightarrow}
 D(F^{e\ast}M) \stackrel{D(F^{e\ast}(\mathcal{V})}{\longrightarrow} D(F^{2e\ast}M) \longrightarrow \ldots).$$
An element $m \in M$ is called $F$-nilpotent if $F^{re}(m)=0$ for
some $r$. Then $M$ is called $F$-nilpotent if $F^{er}(M)=0$ for some
$r \geq 0$. It is possible that every element of $M$ is
$F$-nilpotent but $M$ itself is not.

Below, we recall some properties of the functor $\mathcal{D}$.

\begin{prop} \label{4.16-4.17}Let $A$ be a complete regular local ring.

(a) On the subcategory of $A[F^e]$-modules which are cofinite (i.e.
satisfy the descending chain condition for submodules) as
$A$-modules $\mathcal{D}$ is exact and its values are finitely
generated unit $A[F^e]$-modules (cf. \cite[Theorem 4.2(i)]{Lyu97}
and \cite[Proposition 4.16]{Bl}).

(b) Let $M$ be an $A[F^e]$-module that is finitely generated or
cofinite as an $A$-module. Then $\mathcal{D}(\mathcal{D}(M))\cong
G(M)$ (cf. \cite[Proposition 4.17]{Bl}).

(c) Let $M$ be an $A[F^e]$-module which is a cofinite $A$-module.
Then $M$ is $F$-nilpotent if and only if $\mathcal{D}(M) = 0$ (cf.
\cite[Theorem 4.2(ii)]{Lyu97} and \cite[Proposition 4.20]{Bl}).
\end{prop}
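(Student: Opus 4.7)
The plan is to derive all three parts from three foundational facts: (i) the Frobenius functor $F^{e\ast}$ is exact on the regular ring $A$ (by Kunz's theorem) and commutes with arbitrary colimits; (ii) since $A$ is complete, Matlis duality $D=\Hom(-,E_A)$ is an exact anti-equivalence between the categories of cofinite and of finitely generated $A$-modules; and (iii) filtered colimits are exact in $A$-Mod, and $D$ interchanges $\vil$ and $\vpl$.

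For part (a), I would observe that $\mathcal{D}$ is the composition of Matlis duality applied termwise to the directed system $\{F^{ke\ast}M\}$ followed by a filtered colimit. By (i), each $F^{ke\ast}M$ is cofinite whenever $M$ is, so by (ii) each $D(F^{ke\ast}M)$ is finitely generated, and exactness of $\mathcal{D}$ on cofinite $A[F^e]$-modules follows from exactness of the constituent functors. To check the unit structure, I would verify that the natural map $F^{e\ast}\mathcal{D}(M)\to \mathcal{D}(M)$ is an isomorphism: since $F^{e\ast}$ commutes with $\vil$, the colimit computing $F^{e\ast}\mathcal{D}(M)$ is a reindexing of the one for $\mathcal{D}(M)$, and the reindexing is cofinal. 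Finite generation as a unit $A[F^e]$-module is then immediate because the image of $D(M)$ is a finitely generated $A$-submodule whose $\mathcal{V}^e$-orbit exhausts $\mathcal{D}(M)$.

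For part (b), the plan is to compute
\[
\mathcal{D}(\mathcal{D}(M)) = \vil_\ell D(F^{\ell e\ast}\mathcal{D}(M)) \cong \vil_\ell D(\mathcal{D}(M))
\]
using the unit property from part (a), and then invoking (iii) to get
\[
D(\mathcal{D}(M)) = D\!\left(\vil_k D(F^{ke\ast}M)\right) \cong \vpl_k D(D(F^{ke\ast}M)) \cong \vpl_k F^{ke\ast}M = G(M),
\]
where Matlis biduality on the cofinite modules $F^{ke\ast}M$ yields the penultimate isomorphism and Definition \ref{forgetful} gives the last. The outer $\vil_\ell$ collapses since all terms are canonically identified with $G(M)$. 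The key obstacle here is bookkeeping: one must check that each identification respects the $A[F^e]$-module structures coming from Definitions \ref{def.unit} and \ref{forgetful}, so that the isomorphism is one of $A[F^e]$-modules and not merely of $A$-modules.

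For part (c), the forward implication is straightforward: if $F^{er}(M)=0$, then the structural map $\mathcal{V}^{re}$ vanishes, so the transition maps in the directed system defining $\mathcal{D}(M)$ are eventually zero and the colimit vanishes. For the converse, I would invoke part (b): $\mathcal{D}(M)=0$ yields $G(M)=\mathcal{D}(\mathcal{D}(M))=0$, so the inverse limit $\vpl_k F^{ke\ast}M=0$. Combined with the descending chain condition on the images of iterated Frobenius (which holds because $M$ is cofinite and $D(M)$ is Noetherian), this forces some iterate $F^{er}$ to kill $M$. This final step — passing from vanishing of the inverse limit to genuine $F$-nilpotence — and the structural bookkeeping in part (b) are where I expect the main obstacles to lie.
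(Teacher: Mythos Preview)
The paper does not prove this proposition: each of (a), (b), (c) carries an explicit citation to Lyubeznik \cite{Lyu97} or Blickle \cite{Bl} inside the statement itself, and no argument follows. The proposition is simply a list of results recalled from the literature for later use, so there is no proof in the paper against which to compare your attempt.

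Your sketch is broadly the route taken in those references, and the obstacles you flag are the right ones. Two comments. In part (b), your appeal to part (a) for the unit property of $\mathcal{D}(M)$ covers only the cofinite case; when $M$ is finitely generated you must separately verify that $\mathcal{D}(M)$ is unit (the same cofinality argument works, since each $D(F^{ke\ast}M)$ is then cofinite and $F^{e\ast}$ still commutes with the colimit). In part (c), your detour through $G(M)=0$ is valid once the Mittag--Leffler step is fleshed out, but a shorter argument avoids (b) altogether: $\mathcal{D}(M)=\vil_k D(F^{ke\ast}M)=0$ means every element of $D(M)$ dies at some finite stage, and since $D(M)$ is finitely generated over the complete ring $A$, a single $k$ suffices; faithfulness of Matlis duality then gives $\mathcal{V}^{ke}=0$, i.e.\ $F$-nilpotence. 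This is closer to Lyubeznik's original argument.
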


Let $A$ be a regular local ring and let $I$ be an ideal of $A$. As
we have seen (following \cite[page 19]{Bl}) that $A$ is an
$A[F^e]$-module and come down this structure to its localizations.
The local cohomology modules $H^i_I(A)$ of $A$ with support in $I$
can be calculated as the cohomology modules of the \v{C}ech complex
$$\check{C}(A; x_1,\ldots, x_n) = A \rightarrow A_{x_i} \rightarrow A_{x_ix_j} \rightarrow A_{x_1x_2\cdots x_n}$$
where, $I$ is generated by $x_1,x_2,\ldots, x_n$. Thus, the modules
$H^i_I(A)$ are $A[F^e]$-modules as the category of $A[F^e]$-modules
is Abelian. Moreover, the $H^i_I(A)$ are unite $A[F^e]$-modules for
all $i \in \mathbb{Z}$, but the modules $H^i_{\fm}(A)$ are not unit
in general. For formal local cohomology modules, the situation is a
bit more complicated, however, we show that these kind of modules
have unite $A[F^e]$-modules structure, where $A$ is $F$-finite.

\begin{thrm} \label{unit} Let $(A,\fm)$ be a
regular $F$-finite local ring. Then
$$G(H^i_{\fm}(A/I)) \cong
{\vpl}_nH^i_{\fm}(A/I^{n}),\ i \in \mathbb{Z}$$ which is a unit
$A[F^e]$-module. In particular, $$ {\vpl}_nH^i_{\fm}(A/I^{n}) \cong
H^i_{\fm}(\hat{A^I}),\ i \in \mathbb{Z}$$ as $A[F^e]$-modules, where
$\hat{A^I}$ is the completion of $A$ along $I$.
\end{thrm}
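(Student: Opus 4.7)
The plan is to identify $\vpl_n H^i_\fm(A/I^n)$ with the Hartshorne--Speiser inverse limit $G(H^i_\fm(A/I))$, at which point Proposition \ref{4.1} supplies the unit $A[F^e]$-module structure immediately.

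First, I would observe that $A/I$ carries a canonical $A[F^e]$-module structure via the surjection $F^{e\ast}(A/I) = A/I^{[p^e]} \twoheadrightarrow A/I$ coming from $I^{[p^e]} \subseteq I$, so by functoriality $H^i_\fm(A/I)$ inherits an $A[F^e]$-module structure. Since $A$ is regular, Kunz's theorem makes $F^{e\ast}$ exact, and because $F^{e\ast}$ also commutes with localization it commutes with the \v{C}ech complex, yielding a natural isomorphism
$$F^{e\ast}H^i_\fm(A/I)\ \cong\ H^i_\fm\bigl(F^{e\ast}(A/I)\bigr)\ =\ H^i_\fm(A/I^{[p^e]}),$$
under which the structural map $\mathcal{V}^e$ is identified with the functorial map induced by $A/I^{[p^e]} \twoheadrightarrow A/I$. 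Iterating, the inverse system defining $G(H^i_\fm(A/I))$ is precisely $\{H^i_\fm(A/I^{[p^{ek}]})\}_k$ with the natural surjection transitions.

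Second, writing $I=(x_1,\ldots,x_r)$, one has $I^{[p^{ek}]} \subseteq I^{p^{ek}}$ and $I^{r(p^{ek}-1)+1} \subseteq I^{[p^{ek}]}$, so the filtrations $\{I^n\}_n$ and $\{I^{[p^{ek}]}\}_k$ are cofinal. Hence
$$G(H^i_\fm(A/I))\ \cong\ \vpl_k H^i_\fm(A/I^{[p^{ek}]})\ \cong\ \vpl_n H^i_\fm(A/I^n),$$
and applying Proposition \ref{4.1} to the $A[F^e]$-module $H^i_\fm(A/I)$ shows this common module is a unit $A[F^e]$-module, giving the first isomorphism of the theorem.

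For the ``in particular'' part, since $\hat{A^I} = \vpl_n A/I^n$, I would compute $H^i_\fm(\hat{A^I})$ as $\vil_s \Ext^i_A(A/\fm^s, \hat{A^I})$; using a finite free resolution $P_\bullet$ of $A/\fm^s$ and the fact that $\Hom_A(P_j,-)$ commutes with $\vpl_n$ for finitely generated $P_j$, one obtains $\Hom_A(P_\bullet, \hat{A^I}) \cong \vpl_n \Hom_A(P_\bullet, A/I^n)$. A Mittag-Leffler argument on the finite-length modules $\Ext^i_A(A/\fm^s, A/I^n)$ then permits commuting $\vpl_n$ past cohomology and swapping $\vil_s$ with $\vpl_n$ to recover $\vpl_n H^i_\fm(A/I^n)$, yielding $H^i_\fm(\hat{A^I}) \cong \vpl_n H^i_\fm(A/I^n)$ as $A[F^e]$-modules. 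The main obstacle is precisely this last step: while the Frobenius--cofinality manipulation is essentially formal thanks to the flatness of $F^{e\ast}$ on the regular ring $A$, the interchange of inverse limits with local cohomology on $\hat{A^I}$ requires careful Mittag-Leffler bookkeeping on the doubly indexed family $\{\Ext^i_A(A/\fm^s, A/I^n)\}_{s,n}$, or alternatively an appeal to Schenzel's general results on formal local cohomology.
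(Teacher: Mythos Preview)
Your argument is correct and follows essentially the same route as the paper: identify $F^{e\ast}H^i_\fm(A/I)$ with $H^i_\fm(A/I^{[p^e]})$ via flatness of Frobenius on the regular ring, recognise the inverse system defining $G$ as the one indexed by Frobenius powers, pass to ordinary powers by cofinality (the paper cites \cite[Lemma~3.8]{Sch} for this), and invoke Proposition~\ref{4.1}. The only notable differences are that the paper additionally (and somewhat redundantly) verifies the unit structure on $\vpl_n H^i_\fm(A/I^n)$ directly via the duality $\vpl_n H^i_\fm(A/I^n)\cong \Hom_A(H^{\dim A-i}_I(A),E)$, and that for the final isomorphism with $H^i_\fm(\hat{A^I})$ the paper simply cites \cite[Proposition~2.1]{HS} rather than carrying out the Mittag--Leffler bookkeeping you sketch.
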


\begin{proof}
By what we have seen above,  $H^i_{\fm}(A/I)$ is an $A[F^e]$-module
for all $i \in \mathbb{N}$. Now, we may apply functor $G(-)$ to
$H^i_{\fm}(A/I)$:
$$G(H^i_{\fm}(A/I))=
{\vpl}(\cdots \stackrel{F^{2e\ast}\mathcal{V}^e}{\longrightarrow}
 H^i_{\fm}(A/I^{[p^{2e}]}) \stackrel{F^{e\ast}\mathcal{V}^e}{\longrightarrow} H^i_{\fm}(A/I^{[p^{e}]}) \stackrel{\mathcal{V}^e}
 {\longrightarrow} H^i_{\fm}(A/I)).$$
By virtue of Proposition \ref{4.1}, $G(H^i_{\fm}(A/I))$ is a unite
$A[F^e]$-module. Notice that, the right hand side is nothing but
${\vpl}_eH^i_{\fm}(A/I^{[p^e]})$.

On the other hand, one has ${\vpl}_nH^i_{\fm}(A/I^{n}) \cong
\Hom_A(H^{\dim A-i}_{I}(A),E)$, where $E:=E(A/\fm)$ is the injective
hull of the residue field. The natural map
$$F^{e \ast} \Hom_A(H^{\dim A-i}_{I}(A),E) \rightarrow \Hom_A(H^{\dim A-i}_{I}(A),E)$$
by sending $r \otimes \varphi$ to $rF^{e \ast}(\varphi)$ is an
isomorphism of $A[F^e]$-modules ($r \in A$ and $\varphi \in
\Hom_A(H^{\dim A-i}_{I}(A),E)$). To this end note that $H^{\dim
A-i}_{I}(A)$ and $E \cong H^{\dim A}_{\fm}(A)$ carry natural unite
$A[F^e]$-structure. Thus, ${\vpl}_nH^i_{\fm}(A/I^{n})$ is a unit
$A[F^e]$-module for each $i \in \mathbb{Z}$.

In order to complete the proof, it is enough to show that
${\vpl}_eH^i_{\fm}(A/I^{[p^e]}) \cong {\vpl}_nH^i_{\fm}(A/I^{n})$.
For this reason, consider the decreasing family of ideals
$\{I^{[p^e]}\}_e$. Clearly, its topology is equivalent to the
$I$-adic topology on $A$. Thus, by \cite[Lemma 3.8]{Sch} there
exists a natural isomorphism
$${\vpl}_eH^i_{\fm}(A/I^{[p^e]}) \cong
{\vpl}_nH^i_{\fm}(A/I^{n})$$ for all $i \in \mathbb{Z}$.

The last part follows by \cite[Proposition 2.1]{HS}.
\end{proof}

\begin{remark} It should be noted that with the assumptions of Theorem \ref{unit},
for all $i \in \mathbb{Z}$, the module ${\vpl}_nH^i_{\fm}(A/I^{n})$
carries a natural $D_A$-module and $\mathcal{V}^e$ (cf. Definition
\ref{def.unit}) is a map of $D_A$-modules. The interested reader may
consult \cite[Section 5]{Lyu97} and \cite[Chapter 3]{Bl}.
\end{remark}




\section{Frobenius depth} \label{sect2}

Let $A$ be a regular local $F$-finite ring of characteristic $p>0$
and let $I$ be an ideal of $A$. As we have seen in the previous
section the formal local cohomology modules, are unite
$A[F^e]$-modules. In the present section we use the unit $A[F^e]$
structure of ${\vpl}_nH^i_{\fm}(A/I^{n})$ in order to prove our
results.

\begin{prop} \label{f-nil} Let $(A,\fm)$ be a
regular local and $F$-finite ring. Then
${\vpl}_nH^i_{\fm}(A/I^{n})=0$ if and only if $H^i_{\fm}(A/I)$ is
$F$-nilpotent.
\end{prop}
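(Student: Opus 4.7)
The plan is to identify $\vpl_n H^i_{\fm}(A/I^n)$ with the iterated functor $\mathcal{D}(\mathcal{D}(M))$ applied to $M := H^i_{\fm}(A/I)$, and then play this against Proposition~\ref{4.16-4.17}(c) in both directions. First, $M$ is Artinian (being local cohomology at $\fm$ of a finitely generated module), hence cofinite as an $A$-module. Theorem~\ref{unit} yields $\vpl_n H^i_{\fm}(A/I^n) \cong G(M)$, and Proposition~\ref{4.16-4.17}(b) yields $G(M) \cong \mathcal{D}(\mathcal{D}(M))$. So the question reduces to showing $\mathcal{D}(\mathcal{D}(M)) = 0$ if and only if $M$ is $F$-nilpotent.

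The easy direction is immediate: if $M$ is $F$-nilpotent, then Proposition~\ref{4.16-4.17}(c), applied to the cofinite $M$, gives $\mathcal{D}(M) = 0$, hence $\mathcal{D}(\mathcal{D}(M)) = \mathcal{D}(0) = 0$, and so $\vpl_n H^i_{\fm}(A/I^n)$ vanishes.

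For the converse, assume $\mathcal{D}(\mathcal{D}(M)) = 0$ and set $N := \mathcal{D}(M)$. By Proposition~\ref{4.16-4.17}(a), $N$ is a finitely generated unit $A[F^e]$-module. The key observation is that $\mathcal{D}$ collapses to ordinary Matlis duality on unit modules: since $\mathcal{V}^e_N \colon F^{e\ast}N \to N$ is an isomorphism, each transition map $D(F^{je\ast}N) \to D(F^{(j+1)e\ast}N)$ in the direct system defining $\mathcal{D}(N)$ is an isomorphism, so $\mathcal{D}(N) \cong D(N)$. Hence $D(N) = 0$; but any nonzero finitely generated module over a local ring admits a surjection onto $A/\fm$, and thus a nonzero map into $E(A/\fm)$, so $N = 0$. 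This forces $\mathcal{D}(M) = 0$, and Proposition~\ref{4.16-4.17}(c) concludes that $M$ is $F$-nilpotent.

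The main obstacle is this converse direction: the hypothesis delivers only $\mathcal{D}(\mathcal{D}(M)) = 0$, whereas part (c) is formulated for cofinite modules and $N = \mathcal{D}(M)$ lives in the finitely generated unit category. The bridge is the observation above that $\mathcal{D}$ on a unit module is nothing more than classical Matlis duality, which upgrades the vanishing of $\mathcal{D}(N)$ to the vanishing of $N$ itself and returns us to the cofinite setting where (c) can be invoked.
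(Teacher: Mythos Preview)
Your argument is essentially the paper's argument, packaged slightly differently: both identify $\vpl_n H^i_{\fm}(A/I^n)$ with $\mathcal{D}(\mathcal{D}(M))$ for $M=H^i_{\fm}(A/I)$, observe that $\mathcal{D}$ coincides with ordinary Matlis duality $D$ on the unit module $N=\mathcal{D}(M)$, and then use faithfulness of $D$ together with Proposition~\ref{4.16-4.17}(c). The paper reaches the identification by computing $D(\mathcal{D}(M))$ explicitly as $\vpl_e H^i_{\fm}(A/I^{[p^e]})$, whereas you invoke Theorem~\ref{unit} and Proposition~\ref{4.16-4.17}(b) directly; this is the same content.

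One omission worth fixing: Proposition~\ref{4.16-4.17} is stated for a \emph{complete} regular local ring, and you invoke parts (a), (b), (c) without first passing to the completion. The paper spends its opening paragraph on exactly this reduction, checking that $F$-finiteness and the relevant Frobenius actions survive completion. You should insert the same step before citing Proposition~\ref{4.16-4.17}.
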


\begin{proof}
By the assumptions $A$ is $F$-finite that is $A^e$ is a finitely
generated $A$-module. Then tensoring with $A^e$ commutes with the
inverse limit, as $A^e$ is a free right $A$-module (cf.
\cite[Proposition 1.1(b)]{HS}). Thus, we have
$$A^e \otimes_A \widehat{A}=A^e \otimes_A ({\vpl}_n A/I^n) \cong {\vpl}_n A^e/A^eI^n= {\vpl}_n A^e/I^{n[p^e]}A^e \cong \widehat{A^e}.$$
On the other hand, since the Frobenius action is the same in both
$H^i_{\fm}(A/I)$ and $H^i_{\widehat{\fm}}(\widehat{A}/I
\widehat{A})$, so we may assume that $A$ is a complete regular local
$F$-finite ring.

As $H^i_{\fm}(A/I)$ is an $A[F^e]$-module which is a cofinite
$A$-module, then $\mathcal{D}(H^i_{\fm}(A/I))$ is a finitely
generated unit $A[F^e]$-module (cf. \ref{4.16-4.17}(a)) and
therefore
$$\mathcal{D}(\mathcal{D}(H^i_{\fm}(A/I)))\cong
D(\mathcal{D}(H^i_{\fm}(A/I))).$$ As the functor $D(-)$ transforms
direct limits to inverse limits, then
\begin{eqnarray*}
  D(\mathcal{D}(H^i_{\fm}(A/I)))\!\!\!\! &=&\!\!\!\! D({\vil}(D(H^i_{\fm}(A/I))
  \rightarrow D(F^{e\ast} H^i_{\fm}(A/I)) \rightarrow D(F^{2e\ast} H^i_{\fm}(A/I)) \rightarrow \cdots)  \\
            \!\!\!\! & \cong &\!\!\!\! {\vpl}(\cdots \rightarrow D(D(F^{2e\ast} H^i_{\fm}(A/I)))
  \rightarrow D(D(F^{e\ast} H^i_{\fm}(A/I))) \rightarrow
  D(D(H^i_{\fm}(A/I)))) \\
  \!\!\!\! & \cong &\!\!\!\! {\vpl}_eH^i_{\fm}(A/I^{[p^e]}).
\end{eqnarray*}
As we have seen in the proof of Theorem \ref{unit},
${\vpl}_eH^i_{\fm}(A/I^{[p^e]}) \cong {\vpl}_nH^i_{\fm}(A/I^{n})$.
Therefore,
 $H^i_{\fm}(A/I)$ is $F$-nilpotent if and only if
$\mathcal{D}(H^i_{\fm}(A/I))=0$ (\ref{4.16-4.17}(c)) if and only if
${\vpl}_nH^i_{\fm}(A/I^{n})=0$.
\end{proof}

\begin{remark}
Notice that in the Proposition \ref{f-nil} the $F$-finiteness of $A$
is vital, because it guarantees the $A[F^e]$ structure of the
modules ${\vpl}_nH^i_{\fm}(A/I^{n})$. However, in the light of
\cite[Lemma 4.12]{Bl} if $H^{\dim A-i}_I(A)$ is cofinite, then the
module ${\vpl}_nH^i_{\fm}(A/I^{n})$ is $A[F^e]$-module.
\end{remark}

\begin{cor} \label{Lyu3.2}(\cite[Corollary 3.2]{Lyu06}) Let $(A,\fm)$ be a
 regular local ring and $I$ an ideal of $A$. Then $H^{\dim A-i}_{I}(A)=0$ if
and only if $F^{er}:H^i_{\fm}(A/I) \rightarrow H^i_{\fm}(A/I)$ is
the zero map for some $r>0$.
\end{cor}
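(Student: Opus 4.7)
The plan is to chain together the Gorenstein duality \eqref{duality}, Proposition~\ref{f-nil}, and the definition of $F$-nilpotency, after a faithfully flat reduction to the complete $F$-finite setting.

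First I would reduce to the case where $A$ is complete and $F$-finite. Completion is harmless: by flat base change $H^{\dim A-i}_I(A)\otimes_A\widehat{A}\cong H^{\dim A-i}_{I\widehat{A}}(\widehat{A})$, and faithful flatness of $A\to\widehat{A}$ makes these two modules vanish simultaneously; at the same time the isomorphism $H^i_{\fm}(A/I)\cong H^i_{\widehat{\fm}}(\widehat{A}/I\widehat{A})$ is compatible with the Frobenius action, so the nilpotency of the iterated Frobenius is unaffected. A further faithfully flat base change (to the perfect closure of the residue field, in the spirit of the setup recalled in Section~\ref{sect1}) upgrades $\widehat{A}$ to an $F$-finite complete regular local ring, preserving both vanishing statements.

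Once in this setting, I would invoke the duality \eqref{duality} for the Gorenstein (in fact regular) ring $A$:
$$ {\vpl}_n H^i_{\fm}(A/I^n) \;\cong\; \Hom_A\!\bigl(H^{\dim A - i}_{I}(A),\,E(A/\fm)\bigr).$$
Because $A$ is complete local, $E(A/\fm)$ is a faithful injective cogenerator of the category of $A$-modules, so $\Hom_A(M,E(A/\fm))=0$ forces $M=0$ for every $A$-module $M$. Hence
$$ H^{\dim A - i}_{I}(A) = 0 \quad\Longleftrightarrow\quad {\vpl}_n H^i_{\fm}(A/I^n) = 0.$$
Now Proposition~\ref{f-nil}, which applies precisely because we reduced to the regular $F$-finite complete case, gives
$$ {\vpl}_n H^i_{\fm}(A/I^n) = 0 \quad\Longleftrightarrow\quad H^i_{\fm}(A/I)\ \text{is}\ F\text{-nilpotent},$$
and unwrapping the definition, the right-hand side says exactly that $F^{er}(H^i_{\fm}(A/I))=0$ for some $r\geq 1$, i.e.\ $F^{er}\colon H^i_{\fm}(A/I)\to H^i_{\fm}(A/I)$ is the zero map.

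The main obstacle is the initial reduction: one must verify that enlarging $A$ to a complete $F$-finite regular local ring preserves both the vanishing of $H^{\dim A-i}_I(A)$ and the $F$-nilpotent behaviour of $H^i_{\fm}(A/I)$. Once this bookkeeping is in place, the rest is a direct concatenation of \eqref{duality}, the faithfulness of Matlis duality over a complete local ring, Proposition~\ref{f-nil}, and the very definition of $F$-nilpotency.
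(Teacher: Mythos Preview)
Your argument is correct and follows essentially the same route as the paper: reduce to the complete case, then combine the duality \eqref{duality} with the $F$-nilpotency criterion of Proposition~\ref{f-nil}. Two small differences are worth noting. First, you use Proposition~\ref{f-nil} symmetrically for both implications, whereas the paper invokes it only for the forward direction and, for the converse, argues directly that $\mathcal{D}(H^i_{\fm}(A/I))\cong \varinjlim_e \Ext^{\dim A-i}_A(A/I^{[p^e]},A)\cong H^{\dim A-i}_I(A)$ via local duality and cofinality of Frobenius powers, so that Proposition~\ref{4.16-4.17}(c) alone gives the vanishing; your version is cleaner but relies on the full ``if and only if'' of Proposition~\ref{f-nil}. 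Second, you explicitly flag the passage to an $F$-finite ring (needed for Proposition~\ref{f-nil}) and propose a base change to the perfect closure of the residue field; the paper simply passes to the completion and applies Proposition~\ref{f-nil} without further comment, so your honesty about this bookkeeping is a virtue---though you should be aware that making this reduction fully rigorous (e.g.\ via Lyubeznik's $\Gamma$-construction or a coefficient-field argument) is genuinely delicate, and the perfect-closure tensor you sketch needs care to remain Noetherian and faithfully flat.
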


\begin{proof} As $\widehat{A}$ is a faithful flat $A$-module then by passing to the completion
we may assume that $A$ is complete regular local ring. Let $H^{\dim
A-i}_{I}(A)=0$, so it is cofinite. Then by the duality
(\ref{duality}) in the introduction, one has
${\vpl}_nH^i_{\fm}(A/I^{n})=0$. Hence, Proposition \ref{f-nil}
implies that $H^i_{\fm}(A/I)$ is $F$-nilpotent.

Conversely, assume that $H^i_{\fm}(A/I)$ is $F$-nilpotent. Then by
Propostion \ref{4.16-4.17}(c) $\mathcal{D}(H^i_{\fm}(A/I))=0$ and
therefore one has $H^{\dim A-i}_{I}(A)=0$. To this end note that,
$\mathcal{D}(H^i_{\fm}(A/I^{[p^e]})) \cong
\Ext^{i}_A(A/I^{[p^e]},A)$ for all $e \geq 0$ and the Frobenius
powers of $I$ are cofinal with its ordinary powers.
\end{proof}

In the light of Corollary \ref{Lyu3.2}, Lyubeznik \cite{Lyu06}
defined the $\Fdepth$ of a local ring in order to give a solution to
Grothendieck's Problem.

\begin{definition} Let $(A,\fm)$ be a local ring. The $\Fdepth$ of
$A$ is the smallest $i$ such that $F^{er}$ does not send
$H^i_{\fm}(A)$ to zero for any $r$.
\end{definition}

One of elementary properties of $\Fdepth$ shows that $\Fdepth A$ is
equal to the $\Fdepth$ of its $\fm$-adic completion, $\widehat{A}$
(cf. \cite[Proposition 4.4]{Lyu06}) because $H^i_{\fm}(A)\cong
H^i_{\widehat{\fm}}(\widehat{A})$. In the next result we give an
alternative proof of \cite[Lemma 4.2]{Lyu06} to emphasize that
$\Fdepth$ of $A$ is bounded above by its Krull dimension.

\begin{prop} \label{lyu4.2} Let $(A,\fm)$ be a
 local ring and $I$ an ideal of $A$. Then $F^{er}$ does not send $H^{\dim A}_{\fm}(A)$ to zero for any $r$.
 In particular, $0 \leq Fdepth A \leq \dim A$.
\end{prop}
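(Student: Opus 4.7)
The strategy is to exhibit a single element $\eta \in H^{\dim A}_{\fm}(A)$ such that $F^{er}(\eta) \neq 0$ for every $r \geq 0$; this simultaneously shows that no iterate of Frobenius is the zero map on $H^{\dim A}_{\fm}(A)$.

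First I would reduce to the case where $A$ is $\fm$-adically complete, using the Frobenius-compatible isomorphism $H^i_{\fm}(A) \cong H^i_{\widehat{\fm}}(\widehat{A})$. Setting $d = \dim A$ and choosing a system of parameters $x_1, \ldots, x_d$ for $A$, one has $\sqrt{(x_1, \ldots, x_d)} = \fm$, so
$$H^d_{\fm}(A) \cong \vil_n A/(x_1^n, \ldots, x_d^n),$$
with transition maps given by multiplication by $x_1 \cdots x_d$. Take $\eta$ to be the class of $1 + (x_1, \ldots, x_d)$ in this direct limit; unwinding the Frobenius action on the \v{C}ech model of $H^d_{\fm}(A)$ shows that $F^{er}(\eta)$ is represented at the $p^{er}$-th stage of the system by $1 + (x_1^{p^{er}}, \ldots, x_d^{p^{er}})$.

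The crux is then to verify that $F^{er}(\eta) \neq 0$ in the direct limit. This amounts to the assertion $(x_1 \cdots x_d)^k \notin (x_1^{p^{er}+k}, \ldots, x_d^{p^{er}+k})$ for every $k \geq 0$: indeed, if this containment held for some $k$, then multiplying by $(x_1 \cdots x_d)^{p^{er}-1}$ would force $(x_1 \cdots x_d)^{p^{er}+k-1} \in (x_1^{p^{er}+k}, \ldots, x_d^{p^{er}+k})$, contradicting the monomial theorem (known in equal characteristic $p$ by Hochster) applied to $x_1, \ldots, x_d$. Hence $F^{er}(\eta) \neq 0$ for all $r$, establishing $\Fdepth A \leq \dim A$; the lower bound $0 \leq \Fdepth A$ is immediate from the definition.

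The main obstacle is this last step: one must transform the nonvanishing of the tracked element in the direct limit into a non-containment among powers of a system of parameters, and then invoke the monomial theorem in characteristic $p$. The rest of the argument—reduction to completion, choice of a system of parameters, and identification of the Frobenius in the \v{C}ech picture—is routine.
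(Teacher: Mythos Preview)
Your argument is correct and takes a genuinely different route from the paper. The paper reduces to the complete case, writes $A\cong R/J$ with $R$ regular via Cohen's structure theorem, and then argues by contradiction through the $A[F^e]$-module machinery: if $H^{\dim A}_{\fm}(R/J)$ were $F$-nilpotent, then $\mathcal{D}(H^{\dim A}_{\fm}(R/J))=0$, which (via the argument of Corollary~\ref{Lyu3.2} and the duality~(\ref{duality})) forces $\varprojlim_n H^{\dim A}_{\fm}(R/J^n)=0$, contradicting Schenzel's nonvanishing theorem for the top formal local cohomology. Your approach instead stays inside $A$, picks a system of parameters, tracks the class of $1$ in the \v{C}ech direct-limit model, and reduces the nonvanishing of $F^{er}(\eta)$ to Hochster's monomial theorem in characteristic~$p$. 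Your method is more elementary and self-contained (it avoids the regular cover, the functor $\mathcal{D}$, and formal local cohomology altogether), and it produces an explicit witness element; the paper's method, by contrast, is tailored to illustrate the $A[F^e]$-module framework developed in Section~2. Note also that your reduction to the completion is harmless but not actually needed: systems of parameters, the \v{C}ech description, and the monomial theorem all hold without it.
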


\begin{proof} Since the Frobenius action is the same in both
$H^i_{\fm}(A)$ and $H^i_{\widehat{\fm}}(\widehat{A})$, so we may
assume that $A$ is a complete local ring. Thus, by the Cohen's
Structure Theorem $A \cong R/J$, where $R$ is a complete regular
local ring and $J \subset R$ and ideal. In the contrary, assume that
$H^{\dim A}_{\fm}(R/J)$ is $F$-nilpotent. Then, $\mathcal{D}(H^{\dim
A}_{\fm}(R/J))=0$ (cf. \ref{4.16-4.17}(c)) and with a similar
argument given in the proof of Corollary \ref{Lyu3.2}, one can
deduce the vanishing of $H^{0}_{J}(R)$. Hence, by virtue of
(\ref{duality}), in the introduction, one has ${\vpl}_nH^{\dim A}
_{\fm}(R/J^{n})=0$ which is contradiction (cf. \cite[Theorem
4.5]{Sch}).
\end{proof}

To investigate the other properties of $\Fdepth$, in the next
Theorem we compare the Frobenius depth of $A$ and $A^{sh}$. For this
reason, let us recall some preliminaries. For a local ring $A$ we
denote by $A^{sh}$ the strict Henselization of $A$. A local ring
$(A,\fm,k)$ is said to be strictly Henselian if and only if every
monic polynomial $f(T)\in A[T]$ for which $f(T) \in k[T]$ is
separable splits into linear factors in $A[T]$. For more advanced
expositions on this topic we refer the interested reader to \cite
{Mi}.

\begin{prop} \label{Hensel}  Let $(A,\fm)$ be a  complete local ring
 and $I$ be an ideal of $A$. Then
  $\Fdepth A=\Fdepth A^{sh}$.
\end{prop}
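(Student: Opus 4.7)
The plan is to exploit the faithful flatness of $A \hookrightarrow A^{sh}$ to transfer $F$-nilpotency of each $H^i_{\fm}(-)$ between $A$ and $A^{sh}$, since $\Fdepth$ is defined as the least $i$ at which the local cohomology fails to be $F$-nilpotent.

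First I would recall the standard properties of the strict Henselization: $A \hookrightarrow A^{sh}$ is a faithfully flat local extension of Noetherian local rings, and since $A^{sh}/\fm A^{sh}$ is the separable closure of the residue field of $A$, the maximal ideal $\fm^{sh}$ of $A^{sh}$ coincides with $\fm A^{sh}$. Flat base change then yields a natural isomorphism
$$H^i_{\fm^{sh}}(A^{sh}) \cong H^i_{\fm}(A) \otimes_A A^{sh}, \qquad i \in \mathbb{Z}.$$
Both rings have characteristic $p$, and since a ring of characteristic $p$ carries a unique Frobenius, the inclusion $A \hookrightarrow A^{sh}$ intertwines $F_A$ and $F_{A^{sh}}$. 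Under the displayed isomorphism, $F_{A^{sh}}$ acts on simple tensors by $x \otimes a \mapsto F_A(x) \otimes a^p$, which follows from $x \otimes a = a(x \otimes 1)$ together with the $p$-linearity of Frobenius.

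From this explicit formula I would deduce that $H^i_{\fm}(A)$ is $F$-nilpotent if and only if $H^i_{\fm^{sh}}(A^{sh})$ is. For the forward direction, if $F_A^{er} = 0$ on $H^i_{\fm}(A)$ then $F_{A^{sh}}^{er}(x \otimes a) = F_A^{er}(x) \otimes a^{p^{er}} = 0$ on all simple tensors, hence on the whole module. Conversely, if $F_{A^{sh}}^{er}$ annihilates the tensor product, then $F_A^{er}(x) \otimes 1 = 0$ for every $x \in H^i_{\fm}(A)$; faithful flatness makes $x \mapsto x \otimes 1$ injective, forcing $F_A^{er}(x) = 0$. Thus the set of indices $i$ at which $H^i_{\fm}(-)$ fails to be $F$-nilpotent coincides for $A$ and $A^{sh}$, whence $\Fdepth A = \Fdepth A^{sh}$ by the very definition of Frobenius depth.

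The main delicacy is the Frobenius-compatibility in the second step: one must verify that the Frobenius transported along flat base change is genuinely given by the pointwise rule $F_A(x) \otimes a^p$ on simple tensors, rather than some twist thereof. This hinges on $F_{A^{sh}}|_A = F_A$ and on applying $p$-linearity to the identity $x \otimes a = a(x \otimes 1)$. The completeness of $A$ is convenient (it ensures $A$ is Henselian and that the standard construction of $A^{sh}$ with the cited properties applies) but, as the argument shows, is not strictly needed beyond the noetherianity of $A$.
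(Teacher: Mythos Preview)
Your proof is correct and takes a genuinely different route from the paper. The paper first reduces to the case $A = R/J$ with $R$ regular local (via Cohen's Structure Theorem, which is where completeness enters), and then invokes Lyubeznik's criterion that $H^i_{\fn}(R/J)$ is $F$-nilpotent if and only if $H^{\dim R - i}_J(R) = 0$. Faithful flatness of $R \to R^{sh}$ is applied on the $H^j_J(R)$ side, where vanishing is preserved and reflected under $-\otimes_R R^{sh}$, and one translates back via the same criterion over $R^{sh}$ (using that $R^{sh}$ is again regular, $\dim R = \dim R^{sh}$, and $(R/J)^{sh} = R^{sh}/JR^{sh}$). By contrast, you work directly with $H^i_{\fm}$, tracking the Frobenius through the flat base-change isomorphism $H^i_{\fm^{sh}}(A^{sh}) \cong H^i_{\fm}(A) \otimes_A A^{sh}$ and showing that $F$-nilpotency transfers in both directions by an explicit computation on simple tensors together with faithful flatness for the converse. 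Your argument is more elementary---it avoids both Cohen's theorem and the detour through $H^j_J$---and, as you observe, it does not actually require $A$ to be complete. The paper's route, on the other hand, reinforces the recurring theme that $F$-nilpotency of $H^i_{\fm}(R/J)$ is governed by vanishing of the dual local cohomology $H^{\dim R - i}_J(R)$, which is the mechanism driving the comparison with formal grade later in the section.
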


\begin{proof} First assume that $A$ is a regular local ring. We show that $\Fdepth A/I=\Fdepth
(A/I)^{sh}$. Put $\Fdepth A/I=t$. Then, by virtue of Corollary
\ref{Lyu3.2} one has $H^{i}_{I}(A)=0$ for all $i
> \dim A-t$. Due to the faithfully flatness of the inclusion $A
\rightarrow A^{sh}$ and the fact that $A^{sh}$ is a regular local
ring, it implies that $H^{i}_{I}(A^{sh})=0$ for all $i
> \dim A-t$. Again, using Corollary
\ref{Lyu3.2}, it follows that $\Fdepth (A/I)^{sh} \geq t$. To this
end note that $\dim A=\dim A^{sh}$ and $(A/I)^{sh}=A^{sh}/IA^{sh}$.
With the similar argument one has $\Fdepth A/I \geq \Fdepth
(A/I)^{sh}$. This completes the assertion.

Since $A$ is a complete local ring, then by virtue of Cohen's
Structure Theorem, $A$ is a homomorphic image of a regular local
ring $R$, i.e. $A=R/J$ for some ideal $J$ of $R$. Now, we are done
by the previous paragraph. To this end note that
$$\Fdepth A =\Fdepth R/J=\Fdepth (R/J)^{sh}=\Fdepth A^{sh}.$$
\end{proof}

\begin{remark} From Proposition \ref{Hensel} and \cite[Proposition 4.4]{Lyu06} one may
deduce that
$$\Fdepth A=\Fdepth \widehat{((\hat{A})^{sh})},$$
where, $A$ is a local ring.
\end{remark}
In the following, we compare the invariants $\depth$, $\Fdepth$ and
$\fgrade$.

\begin{thrm} \label{comparison}  Let $(A,\fm)$ be a  local $F$-finite ring which is a
homomorphic image of a regular local $F$-finite ring and let $I$ be
an ideal of $A$. Then
 $$ \fgrade(I,A) \leq \depth A \leq \Fdepth A.$$
\end{thrm}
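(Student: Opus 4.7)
The plan is to treat the two inequalities in the chain $\fgrade(I,A) \leq \depth A \leq \Fdepth A$ separately, as they rest on quite different ideas.

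The right-hand inequality $\depth A \leq \Fdepth A$ is immediate from the definition of $\Fdepth$. For any $i < \depth A$ one has $H^i_\fm(A) = 0$, so every Frobenius iterate $F^{er}$ trivially kills $H^i_\fm(A)$; consequently the smallest index $i$ at which no $F^{er}$ annihilates $H^i_\fm(A)$ must satisfy $i \geq \depth A$. Neither the $F$-finiteness hypothesis nor the presentation of $A$ as a quotient of a regular ring enters at this step, so this half is essentially a bookkeeping remark about which modules are automatically zero.

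The left-hand inequality $\fgrade(I,A) \leq \depth A$ is characteristic-free and is the general bound established by Schenzel in his development of formal local cohomology: for any finitely generated module $M$ over a Noetherian local ring and any ideal $I\subseteq\fm$, one has $\fgrade(I,M) \leq \depth M$. The plan is to apply this with $M=A$, citing \cite{Sch}. The underlying mechanism can be made transparent by means of the presentation $A = R/J$ with $R$ regular $F$-finite local of dimension $n$: the Matlis duality (\ref{duality}) applied to the Gorenstein ring $R$, together with the independence of base $H^i_\fm(A/I^n) = H^i_\fn(R/(J+I'^n))$ where $I' \subseteq R$ denotes the preimage of $I$, yields
\[
{\vpl}_n H^i_\fm(A/I^n) \;\cong\; \Hom_R\!\bigl({\vil}_n \Ext^{n-i}_R(R/(J+I'^n),\, R),\; E(A/\fm)\bigr),
\]
so that $\fgrade(I,A)$ is controlled by the direct limit of Ext modules over $R$, and hence ultimately by $\pd_R A = n - \depth A$ via an Auslander--Buchsbaum count.

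I expect the main obstacle in this in-paper route to be the careful analysis of the displayed direct limit: because $\{R/(J+I'^n)\}$ is not a system of powers of a single ideal of $R$, the identification of ${\vil}_n \Ext^{n-i}_R(R/(J+I'^n), R)$ with local cohomology over $R$ requires exploiting that $\{J+I'^n\}$ has the same radical as $\{(J+I')^n\}$ while tracking the comparison maps on Ext modules, and cohomological dimension is not monotone in the ideal. The cleanest route therefore sidesteps this subtlety by directly quoting \cite{Sch} for the general inequality $\fgrade(I,M) \leq \depth M$; concatenating with the immediate $\depth A \leq \Fdepth A$ then completes the proof.
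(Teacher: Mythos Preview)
Your proposal is correct. The left inequality $\fgrade(I,A)\le\depth A$ is handled exactly as in the paper, by quoting Schenzel's general bound. The difference lies in the right inequality $\depth A\le\Fdepth A$: you observe directly that $H^i_{\fm}(A)=0$ for $i<\depth A$ forces these modules to be trivially $F$-nilpotent, whereas the paper instead passes to a presentation $A\cong R/J$ with $(R,\fn)$ regular $F$-finite and argues
\[
\depth A \;\le\; \fgrade(J,R) \;=\; \Fdepth R/J \;=\; \Fdepth A,
\]
where the first step is again Schenzel and the equality $\fgrade(J,R)=\Fdepth R/J$ is obtained from Proposition~\ref{f-nil} (vanishing of $\varprojlim_n H^i_{\fn}(R/J^n)$ is equivalent to $F$-nilpotence of $H^i_{\fn}(R/J)$). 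Your route is shorter and does not use the $F$-finite or quotient-of-regular hypotheses at all for this half; the paper's detour, on the other hand, produces the identity $\fgrade(J,R)=\Fdepth A$ along the way, which is precisely the content reused in the proof of Corollary~\ref{connected}. So the paper is really proving that sharper equality and reading off the theorem as a byproduct, while you prove only what is stated.
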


\begin{proof} We have ${\vpl}_nH^i_{\fm \hat{A}}(\hat{A}/I^n \hat{A}) \cong {\vpl}_nH^i_{\fm}(A/I^n
A)$ (cf. \cite[Proposition 3.3]{Sch}) and $\Fdepth A \cong \Fdepth
\hat{A}$ (cf. \cite[Proposition 4.4]{Lyu06}). Thus, we may assume
that $A$ is a complete local ring. Suppose that $(R,\fn)$ is a
regular local $F$-finite ring with $A \cong R/J$ where, $J$ is an
ideal of $R$.

 Put $\fgrade(J,R)=t$. Then by
definition ${\vpl}_nH^i_{\fn}(R/J^n)=0$ for all $i <t$. It follows
from the Proposition \ref{f-nil} that $H^{i}_{\fn}(R/J)$ is
$F$-nilpotent for all $i<t$, i.e. $\Fdepth R/J \geq t$. With a
similar argument and again using Proposition \ref{f-nil} we have
$\fgrade(J,R)\geq \Fdepth R/J$. Thus, $\fgrade(J,R)= \Fdepth R/J$.

Now, we are done by \cite[Lemma 4.8(b)]{Sch}, \cite[Remark 3.1]{E13}
and the previous paragraph. To this end note that
\begin{eqnarray*}
  \fgrade(I,A) \leq \depth A  &\leq&  \fgrade(J,R)  \\
             &=& \Fdepth R/J \\
             &=& \Fdepth A.
\end{eqnarray*}
\end{proof}

Let $A$ be a complete local ring containing a perfect field $k$ as
its residue field, then $A$ satisfies the condition of Theorem
\ref{comparison}. To this end, note that by Remark \ref{Hart-Speis},
$A$ is F-finite. Furthermore, by virtue of Cohen's Structure Theorem
$A \cong R/J$ for some ideal $J \subset R$, where
$R=k[[x_1,\ldots,x_n]]$ is a regular $F$-finite ring.

\begin{cor} \label{connected}
 Let $(A,\fm)$ be a regular local and $F$-finite ring. Then we have
$$\depth A/I \leq \fgrade(I,A)=\Fdepth A/I \leq \dim A/I.$$
\end{cor}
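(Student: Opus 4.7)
The plan is to deduce the corollary by specializing the proof of Theorem \ref{comparison} to the case where the regular local $F$-finite ring appearing in that proof is $A$ itself, and then invoking Proposition \ref{lyu4.2} for the last inequality. Since $A$ is already assumed regular local and $F$-finite, the quotient $A/I$ is automatically a local $F$-finite ring which is a homomorphic image of a regular local $F$-finite ring, so all ingredients of Section \ref{sect2} are at hand with $R := A$ and $J := I$.

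First I would establish the middle equality $\fgrade(I,A) = \Fdepth(A/I)$. This is exactly the identity $\fgrade(J,R) = \Fdepth(R/J)$ verified in the interior paragraph of the proof of Theorem \ref{comparison}, specialized to $R = A$, $J = I$. Concretely, if $\fgrade(I,A) = t$, then by definition $\vpl_n H^i_\fm(A/I^n) = 0$ for all $i < t$, and Proposition \ref{f-nil} translates this into $F$-nilpotence of $H^i_\fm(A/I)$ for each such $i$, so $\Fdepth(A/I) \geq t$; the reverse direction is symmetric and uses the same proposition.

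Next I would verify the lower bound $\depth(A/I) \leq \fgrade(I,A)$. This is the ``$\depth A \leq \fgrade(J,R)$'' step in the chain of inequalities displayed at the end of the proof of Theorem \ref{comparison}, which is an application of \cite[Remark 3.1]{E13}, now invoked with $R$ and $J$ replaced by $A$ and $I$ respectively. Finally, the upper bound $\Fdepth(A/I) \leq \dim(A/I)$ is an immediate instance of Proposition \ref{lyu4.2} applied to the local ring $A/I$, giving the last inequality.

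Overall, no new technical input is required beyond what is already available: the whole statement packages together Proposition \ref{f-nil} (through the argument of Theorem \ref{comparison}), the inequality of \cite[Remark 3.1]{E13}, and the dimension bound of Proposition \ref{lyu4.2}. The only step where care is needed is keeping track of which ring is playing the role of the regular ambient ring when applying each cited result; the main conceptual obstacle — proving that the formal and Frobenius invariants coincide — is already resolved by Proposition \ref{f-nil} and inherited directly.
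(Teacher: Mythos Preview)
Your proposal is correct and follows essentially the same approach as the paper: the equality $\fgrade(I,A)=\Fdepth A/I$ is extracted from the proof of Theorem \ref{comparison} with $R=A$, $J=I$, and the flanking inequalities are quoted from known results. The only cosmetic difference is that the paper obtains both inequalities $\depth A/I \leq \fgrade(I,A) \leq \dim A/I$ by a single citation to \cite[Remark 3.1]{E} (the 2014 paper), whereas you split them, taking the lower bound from \cite[Remark 3.1]{E13} as in Theorem \ref{comparison} and the upper bound from Proposition \ref{lyu4.2}.
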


\begin{proof}
The assertion follows from what we have seen in the proof of Theorem
\ref{comparison} and
 \cite[Remark 3.1]{E}.
\end{proof}


\begin{remark}
(a) The necessary and sufficient conditions for small values of the
$\Fdepth$ of $A$ is given in \cite[Corollary 4.6]{Lyu06}.
\begin{itemize}
  \item [(1)] $\Fdepth A>0$ if and only if $\dim A>0$.
  \item [(2)] $\Fdepth A>1$ if and only if $\dim A \geq 2$ and the punctured spectrum of $A$ is formally
geometrically connected.
\end{itemize}
Now, let $A$ be $F$-finite and $\depth A=0 < \dim A$.
 Then, one has $\fgrade(I,A)=0 < \Fdepth
 A$. It shows that the inequality in Theorem \ref{comparison}
can be strict.

(b) Keep the assumptions in Corollary \ref{connected}, if $\Fdepth
A/I>1$, then by \cite[Lemma 5.4]{Sch}, one has
$\Supp_{\hat{A}}(\hat{A}/I \hat{A})\setminus \{\hat{\fm}\}$ is
connected. To this end, note that $\hat{A}$ is a local ring so it is
indecomposable.
\end{remark}

\proof[Acknowledgements] This project is based on a question asked
by Professor Josep \`{A}lvarez-Montaner on the comparison of
$\Fdepth$ and $\fgrade$ when the author was visiting the Universitat
Polit\`{e}cnica de Catalunya, in September 2013. Hereby, I would
like to express my thanks to the Universitat Polit\`{e}cnica and
specially, to Josep, for their support and warm hospitality. I am
also strongly indebted with Josep for several fruitful discussions.

\end{document}